\def\Vo{\mathop{\mathrm{Vo}}}
\def\Vol{\mathop{\mathrm{Vol}}}
\def\ge{\geqslant}
\def\geq{\geqslant}
\def\le{\leqslant}
\def\leq{\leqslant}
\newtheorem{theo}{Theorem}[section]
\newtheorem{coro}[theo]{Corollary}
\newtheorem{prop}[theo]{Proposition}
\newtheorem{cons}[theo]{Construction}
\theoremstyle{definition}
\newtheorem{defi}[theo]{Definition}
\newtheorem{rema}[theo]{Remark}
\numberwithin{equation}{section}
\newtheorem{exam}{Example}
\begin{document}

\title[Minimal and H-minimal submanifolds in toric geometry]{Minimal and H-minimal submanifolds in toric geometry}

\author{Artem Kotelskiy}

\begin{abstract}
In this paper we investigate a family of Hamiltonian-minimal Lagrangian submanifolds in ${\mathbb C}^m$, ${\mathbb C}P^m$ and other symplectic toric manifolds constructed from intersections of real quadrics. In particular we explain the nature of this phenomenon by proving H-minimality in a more conceptual way, and prove minimality of the same submanifolds in the corresponding moment-angle manifolds.
\end{abstract}

\maketitle

\tableofcontents

\section{Introduction}
Hamiltonian minimality (H-minimality for short) for Lagrangian submanifolds is a symplectic analogue
of Riemannian minimality. A Lagrangian embedding is called H-minimal if the variations of its volume
along all Hamiltonian vector fields are zero. This notion was introduced in the work of Y.-G. Oh~\cite{O}
in connection with the celebrated Arnold conjecture on the number of fixed points of a Hamiltonian
symplectomorphism. A simple example of H-minimal Lagrangian submanifold is a coordinate torus
[9] $S^1_{r_1}\times  \dots \times S^1_{r_m} \subset {\mathbb C}^m$, where $S^1_{r_k}$ is a circle of radius
$r_k > 0$ in $k$-th coordinate subspace of ${\mathbb C}^m$. Other examples of H-minimal Lagrangian submanifolds in
${\mathbb C}^m$ were constructed in~\cite{AC},~\cite{CU},~\cite{HR}. A lot of examples and
current problems can be found in the survey ~\cite{MO}.

In 2003 A.Mironov~\cite{M} suggested a universal construction of H-minimal Lagrangian embeddings $N \hookrightarrow {\mathbb C}^m$ based on
intersections of real quadrics $Z$ of special type. The same intersections of quadrics appear in toric topology as (real) moment-angle manifolds~\cite[Section 6.1]{BP}. Using the methods developed by Y.Dong~\cite{D} and Hsiang-Lawson~\cite{HL} we prove minimality of embeddings $N \hookrightarrow Z$, and explain the underlying reasons for H-minimality for embeddings $N \hookrightarrow {\mathbb C}^m$. Also, based on the note of A.Mironov and T.Panov~\cite{MP2} where they refer to the results of Y.Dong, we give an explicit construction and independent proof of H-minimality of Lagrangian submanifolds in symplectic toric manifolds, which generalize the result of A.Mironov.

The paper is organized as follows: after introduction we give some background material on the notions of minimality and H-minimality. In the third section we review the construction of symplectic toric manifolds. The main results are stated in the last two sections.

\section{Minimality and H-minimality}

\subsection{Minimality}

Let $M$ and $L$ be smooth compact manifolds, $g$ be a Riemannian metric on $M$. Assume that $i:L \hookrightarrow M$ is an embedding, i.e. $L$ is a submanifold in $M$. We endow $L$ with the metric induced from $M$.

\begin{defi}
A smooth {\itshape variation} of $i \colon L \to M$ is $C^{\infty}$-map $i:[-\epsilon,+\epsilon]\times L \rightarrow M$ such that all maps $i_t = i(t,\cdot):L \hookrightarrow M$ are embeddings and $i_0 =i$.
\end{defi}

Denote $i_t(y)=y_t$, $ \frac d {dt} i_t(y)= X(y_t)$, $i_t(L)=L_t$. In this notation, we say that variation $i_t$ happens along the vector field $X$.
\begin{defi}
An embedding $i: L \hookrightarrow M$ is called {\itshape minimal} if volume of $L$ is stationary with respect to all variations, i.e.
\begin{equation} 
\left. \frac d{dt} \right |_{t=0} \Vol(L_t) = 0.
\end{equation}
\end{defi}

Here we formulate a classical criterion of minimality. The proof can be found in~\cite[Theorem~4]{L}.
\begin{theo}[First variation formula]
\begin{equation}
\left. \frac d{dt} \right |_{t=0} \Vol(L_t) = - \int_L \langle H, X \rangle, 
\end{equation}
where $H$ is the mean curvature vector field of embedding $i: L \hookrightarrow M$ and $X$ is the variation vector field.
\end{theo}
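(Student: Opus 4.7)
The plan is to reduce to a local computation by pulling back the metric to $L$. Choose local coordinates $y = (y^1, \dots, y^n)$ on an open set $U \subset L$ and let $g_t = i_t^* g$ denote the family of induced metrics on $L$. Then
\[
\Vol(L_t) = \int_L \sqrt{\det g_t(y)}\, dy,
\]
globalized by a partition of unity, so differentiating under the integral at $t=0$ gives
\[
\left.\frac{d}{dt}\right|_{t=0}\Vol(L_t) = \int_L \tfrac12\,\mathrm{tr}_{g_0}\!\left(\dot g_0\right)\, d\mathrm{vol}_{g_0},
\]
where $\dot g_0$ denotes the $t$-derivative of $g_t$ at $t=0$.

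To identify $\dot g_0$ geometrically, I would observe that the first variation of $\Vol(L_t)$ depends only on $X|_L$, so one may replace $i_t$ by $\phi_t \circ i$, where $\phi_t$ is the flow of an arbitrary extension of $X$ to a neighborhood of $L$ in $M$. Then $g_t = i^* \phi_t^* g$, hence $\dot g_0 = i^*(L_X g)$, where $L_X g$ is the Lie derivative. The problem thus reduces to the pointwise identity
\[
\tfrac12\,\mathrm{tr}_{g_0}\!\left(i^*(L_X g)\right) = \mathrm{div}_L(X^T) - \langle H, X^\perp\rangle,
\]
where $X = X^T + X^\perp$ is the decomposition along $L$ into components tangent and normal to $L$; here $\langle H, X\rangle = \langle H, X^\perp\rangle$ because $H$ is a normal vector field. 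Linearity in $X$ then lets me treat $X^T$ and $X^\perp$ separately.

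The tangential identity $\tfrac12\,\mathrm{tr}_{g_0}(L_{X^T} g_0) = \mathrm{div}_L(X^T)$ is the classical formula for the divergence of a vector field on a Riemannian manifold, and its contribution to the integral vanishes by Stokes' theorem on the compact manifold $L$. For the normal part I would use the Weingarten formula: for a vector field $V$ normal to $L$ along $L$ and tangent vectors $e_i, e_j \in T_pL$,
\[
(L_V g)(e_i, e_j) = g(\nabla_{e_i}V, e_j) + g(e_i, \nabla_{e_j}V) = -2\,g\!\left(V, II(e_i, e_j)\right),
\]
where $\nabla$ is the Levi-Civita connection of $(M,g)$ and $II$ is the second fundamental form of $L$. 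Taking the $g_0$-trace yields $-2\langle V, H\rangle$ by the definition $H = \mathrm{tr}_{g_0} II$, which supplies the missing normal contribution.

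I expect the Weingarten step to be the main technical obstacle: one must carefully argue that only the tangential part of $\nabla_{e_i} V$ pairs nontrivially with $e_j$, and then use $g(V, e_j) \equiv 0$ along $L$ to rewrite this tangential part in terms of the shape operator of $L$. Once this pointwise identity is in hand, the global statement follows by integration over $L$, with the tangential divergence term disappearing as noted.
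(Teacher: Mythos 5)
Your proposal is correct and complete in outline: the reduction to $\tfrac12\,\mathrm{tr}_{g_0}(i^*(L_Xg))$, the tangential/normal splitting with the divergence term killed by Stokes' theorem, and the Weingarten computation $\mathrm{tr}_{g_0}(L_{X^\perp}g)=-2\langle H,X^\perp\rangle$ are exactly the standard argument. The paper itself gives no proof of this theorem, deferring to Lawson's lectures (\cite[Theorem~4]{L}), and your argument is essentially the one found there, so there is nothing to compare beyond noting agreement.
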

\begin{coro}
An embedding $i:L \hookrightarrow M$ is minimal if and only if $H \equiv 0$.
\end{coro}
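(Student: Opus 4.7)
The corollary is an immediate application of the First variation formula together with the fundamental lemma of the calculus of variations, so the proof essentially amounts to unpacking the quantifiers in the definition of minimality.

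The easy direction is the implication $H \equiv 0 \Rightarrow$ minimal. If the mean curvature vector vanishes identically along $L$, then for any variation $i_t$ with variation vector field $X$ the integrand $\langle H, X\rangle$ is identically zero, so the formula gives $\left.\tfrac{d}{dt}\right|_{t=0}\Vol(L_t) = 0$. Since this holds for every smooth variation, $i$ is minimal by definition.

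For the converse, suppose $i$ is minimal, so that $\int_L \langle H, X\rangle = 0$ for every variation vector field $X$. The first step is to observe that any smooth vector field along $L$ (in particular any smooth normal vector field) can be realized as the variation vector field of some smooth variation: given such a field $X$, extend it to a vector field on $M$ using a tubular neighborhood, and set $i_t(y) = \exp_{i(y)}(tX(y))$; for small $\epsilon$ all $i_t$ are embeddings. Next, because $H$ is normal to $L$, the pairing $\langle H, X\rangle$ depends only on the normal component of $X$, so it suffices to test against normal vector fields. Taking $X = \phi H$ for an arbitrary compactly supported nonnegative smooth function $\phi \in C^\infty(L)$, minimality forces
$$ 0 = -\int_L \langle H, \phi H\rangle = -\int_L \phi\, \lvert H\rvert^2. $$
By the standard vanishing argument (plugging in bump functions localized near any point where $H \neq 0$), this yields $\lvert H\rvert^2 \equiv 0$, hence $H \equiv 0$.

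There is no real obstacle here; the only subtlety is the construction of a smooth variation realizing a prescribed normal vector field along $L$, which is a routine tubular-neighborhood/exponential-map argument. Everything else is the standard argument by which first-order necessary conditions in the calculus of variations translate into pointwise Euler--Lagrange equations.
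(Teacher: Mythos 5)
Your proof is correct and follows exactly the standard route the paper intends: the paper states this corollary without proof as an immediate consequence of the first variation formula, and the argument you give (testing against $X=\phi H$ and invoking the fundamental lemma) is precisely the one the paper itself spells out later in the proof of Theorem~\ref{theo2.3} for the equivariant case. No issues.
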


Let us formulate and prove an important criterion for minimality of $G$-invariant submanifolds, due to Hsiang-Lawson~\cite{HL}. Let $G$ be a compact connected Lie group, which acts on manifold $M$ by isometries. An embedding $i: L \hookrightarrow M$ is called $G$-invariant if there is a smooth action $G:L$ such that $ig=gi$ for all $g \in G$. A variation $i_t$ of a $G$-invariant embedding is equivariant if $i_t g= g i_t$ for all $g \in G$ and $t \in [-\epsilon,+\epsilon]$.
\begin{theo}\label{theo2.3}
Let $i: L \hookrightarrow M$ be a $G$-invariant embedding. Then embedding $i: L \hookrightarrow M$ is minimal if and only if volume of $L$ is stationary with respect to all equivariant variations.
\end{theo}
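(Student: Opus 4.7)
My plan is as follows. The ``only if'' direction is immediate, since equivariant variations form a subclass of all variations. For the converse, I would run a standard averaging argument exploiting the compactness of $G$. Two preliminary observations are needed: first, since $G$ acts on $M$ by isometries and preserves $L$ setwise, the mean curvature vector field $H$ of $i$ is $G$-equivariant, i.e.\ $g_* H(y) = H(gy)$ for all $g \in G$ and $y \in L$; second, unpacking definitions, a variation $i_t$ of $i$ is equivariant exactly when its variation vector field $X \in \Gamma(i^*TM)$ satisfies $g_* X(y) = X(gy)$ for all $g \in G$.

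Given an arbitrary variation $i_t$ with variation vector field $X$, I would define the averaged field
\[
\tilde X(y) \;=\; \int_G g_*^{-1}\, X(gy)\, d\mu(g),
\]
where $\mu$ is the normalized Haar measure on $G$. A short substitution $g \mapsto g' g^{-1}$ verifies that $\tilde X$ is $G$-equivariant; smoothness of $\tilde X$ follows from smoothness of the integrand over the compact domain $G$, so the flow of $\tilde X$ for small time furnishes an equivariant variation $\tilde i_t$ of $i$. The crucial identity to establish is
\[
\int_L \langle H, \tilde X\rangle \;=\; \int_L \langle H, X\rangle.
\]
To prove it, I would apply Fubini, transfer $g_*^{-1}$ from the second to the first slot of the inner product using that $g$ acts by isometries, rewrite $g_* H(y)$ as $H(gy)$ by the equivariance of $H$, and then change variables $y \mapsto gy$ on $L$ (which preserves the induced volume form, again by isometry). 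The resulting inner integrand is independent of $g$, and integrating over $G$ against $d\mu$ yields the claimed identity.

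Combining this with the first variation formula, the hypothesis that the volume is stationary along every equivariant variation gives $\int_L \langle H, X\rangle = 0$ for \emph{every} variation vector field $X$. The fundamental lemma of the calculus of variations then forces $H \equiv 0$, so $i$ is minimal by the corollary to the first variation formula. The point I expect to require the most care is the $G$-equivariance of $H$ together with the computation of the key identity above: both steps depend essentially on $G$ acting through isometries, which is what makes the change-of-variable Jacobian trivial and the inner product $G$-invariant simultaneously, so without this hypothesis the averaging procedure would not preserve the first variation.
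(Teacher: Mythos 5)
Your argument is correct, but it takes a genuinely different route from the paper's. The paper does not touch arbitrary variations at all: it fixes a $G$-invariant function $\phi$ on $L$, forms the single test variation $i_t(y)=\exp_{y}\bigl[t\phi(y)H(y)\bigr]$, checks directly that it is equivariant (using $g_*H=H$ and the fact that isometries commute with $\exp$), and reads off $\int_L\phi\lvert H\rvert^2=0$ from the first variation formula; arbitrariness of $\phi$ (indeed $\phi\equiv 1$ suffices) gives $H\equiv 0$. Your averaging over Haar measure instead establishes the stronger intermediate fact that the first variation along any field $X$ coincides with that along its equivariant average $\tilde X$ --- essentially Palais' principle of symmetric criticality --- which then reduces the general case to the equivariant one; the identity $\int_L\langle H,\tilde X\rangle=\int_L\langle H,X\rangle$ is verified correctly and does isolate exactly where the isometry hypothesis enters. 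The trade-off: the paper's test-variation argument is shorter and needs no integration over $G$, while yours explains more transparently \emph{why} equivariant variations detect everything. Two small points to tighten. First, your claim that a variation is equivariant ``exactly when'' its variation field is equivariant is only true in one direction (equivariance of $i_t$ for all $t$ implies equivariance of the $t=0$ velocity, not conversely); you only need the construction of \emph{some} equivariant variation with prescribed equivariant velocity, which the exponential map provides. Second, $\tilde X$ is a section of $i^*TM$ along $L$, not a vector field on $M$, so ``the flow of $\tilde X$'' should be replaced by the variation $y\mapsto\exp_y\bigl(t\tilde X(y)\bigr)$, as in the paper.
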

\begin{proof}
Assume that the volume of $L$ is stationary with respect to all equivariant variations.

Let $H$ be the mean curvature vector field on $i(L)$. Because $H$ depends only on $i$, which is $G$-invariant, we have $g_{*}H=H$ for all $g \in G$.
Assume that $\phi$ is a smooth $G$-invariant function on $L$. Define the following variation $i_t$, $-\epsilon < t < \epsilon$:
\begin{equation}
i_t(y)=y_t=exp_{y_0}[t\phi(y_0)H(y_0)].
\end{equation}
We chose $\epsilon > 0$ small enough so that all $i_t$ are embeddings. Notice that for all $g \in G$,
\begin{align}
\begin{split}
& g \circ i_t(y) = g \circ exp_{y_0}[t\phi(y_0)H(y_0)] =  exp_{g y_0}[(g_* (t\phi(y_0)H(y_0))] =          \\
&=exp_{g y_0}[t\phi( y_0)g_* H( y_0)] =  exp_{g y_0}[t\phi(g y_0) H( g y_0)] = i_t \circ g(y),
\end{split}
\end{align}
because $g y_0 = g i(y) = i (gy)$, and $\phi$ is a $G$-invariant function. Hence $i_t$ is an equivariant variation.

Notice that the variation $i_t$ happens along $\phi H$ (by definition), and therefore by the first variation formula we get
\begin{equation} 
\left. \frac d{dt} \right |_{t=0} \Vol(L_t) = - \int_L \phi |H|^2 , 
\end{equation}
which is zero because the variation is equivariant. This, together with the arbitrariness of $\phi$, implies $H \equiv 0$ which finishes the proof.
\end{proof}

\subsection{H-minimality}

Let $M$ be a Kaehler manifold with symplectic structure $\omega$, almost complex structure $J$ and metric $g$. A vector field $X$ is called Hamiltonian if $i_X \omega = \omega (X,\cdot)=df$, where $f$ is a smooth function on $M$.
\begin{defi}
A Lagrangian embedding $i:L \hookrightarrow M$ is called {\itshape Hamiltonian-minimal (H-minimal)} if volume of $L$ is stationary with respect to variations along all Hamiltonian vector fields.
\end{defi}
\begin{prop}[First variation formula]
Let $(M,\omega,g)$ be a Kaehler manifold. A Lagrangian submanifold $L \subset M$ is H-Minimal if and only if its mean curvature vector H satisfies
\begin{equation}
\delta i_H \omega = 0 
\end{equation}
on $L$, where $\delta$ is the Hodge dual operator of $d$ on $L$.
\end{prop}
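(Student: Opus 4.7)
The plan is to start from the first variation formula (already established) and specialize to Hamiltonian variation fields, then repackage the resulting integral via the Lagrangian/Kähler identifications until the codifferential appears.

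First I would recall that by the first variation formula, a variation along a vector field $X$ changes the volume at rate $-\int_L g(H,X)$. For H-minimality we only care about $X=X_f$ with $i_{X_f}\omega=df$, and since $H$ is normal only the normal component $X_f^N$ along $L$ contributes to the integrand. So the first step is to identify $X_f^N$ intrinsically on $L$. Using the K\"ahler identity $\omega(U,V)=g(JU,V)$ one gets $X_f=-J\,\nabla^M f$, and restricted to $L$ the decomposition $\nabla^M f = (\nabla^M f)^T+(\nabla^M f)^N$ combined with the Lagrangian condition (which makes $J$ swap $TL$ and $NL$) shows $X_f^N = -J(\nabla^L(f|_L))$, where $\nabla^L$ denotes the gradient on $L$ for the induced metric.

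Second, I would translate the integrand $g(H,X_f)=g(H,X_f^N)$ into the language of $1$-forms on $L$. Define $\alpha_H := (i_H\omega)|_L$, a $1$-form on $L$; the Lagrangian property ensures this is well-defined (it pairs with tangent vectors only). Using skew-symmetry of $J$ with respect to $g$ and $J$-invariance of $g$, a short pointwise calculation gives
$$ g(H, X_f) = g(JH,\nabla^L(f|_L)) = \alpha_H\bigl(\nabla^L (f|_L)\bigr), $$
because $\alpha_H$ is metrically dual to the tangent field $JH$ on $L$.

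Third, I would integrate by parts on the closed Riemannian manifold $L$:
$$ \int_L g(H,X_f) \;=\; \int_L \bigl\langle \alpha_H, \, d(f|_L)\bigr\rangle \;=\; \int_L (f|_L)\,\delta\alpha_H. $$
Combined with the first variation formula this yields
$$ \left.\tfrac{d}{dt}\right|_{t=0}\!\Vol(L_t) = -\int_L (f|_L)\,\delta(i_H\omega). $$
Since every smooth function on $L$ arises as $f|_L$ for some smooth $f$ on $M$ (extend by a tubular neighborhood and bump function), H-minimality is equivalent to the right-hand side vanishing for all such $f$, which by the fundamental lemma of the calculus of variations is equivalent to $\delta\,i_H\omega=0$ on $L$.

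The one place that needs care rather than routine calculation is the normal/tangent bookkeeping in the second step: one must correctly use the Lagrangian condition to see that $J$ really does interchange $TL$ and $NL$ pointwise, and keep track of signs coming from $J^2=-\mathrm{Id}$ and from $g(JU,V)=-g(U,JV)$. Everything after that is integration by parts on a closed manifold.
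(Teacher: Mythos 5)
Your proposal is correct and follows essentially the same route as the paper: first variation formula, conversion of $\langle H,X\rangle$ into the pairing $\langle i_H\omega, df\rangle$ of $1$-forms on $L$, integration by parts to produce $\delta i_H\omega$, and the arbitrariness of $f$. The paper compresses your second step into the single identity $\langle H,X\rangle=\langle i_H\omega,i_X\omega\rangle$; your explicit normal/tangent bookkeeping via the Lagrangian condition is just the careful justification of that line.
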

\begin{proof}
By the first variation formula along a Hamiltonian vector field X,
\begin{multline} \left. \frac d {dt} \right |_{t=0}  \Vol(L_t) = - \int_L \langle H , X \rangle = - \int_L \langle i_H \omega , i_X \omega \rangle \\ = - \int_L \langle i_H \omega , d f \rangle  = - \int_L \langle \delta i_H \omega, f \rangle ,
\end{multline}
where $L_t$ is the deformation along X. Since $f$ is arbitrary, $\left. \frac d {dt} \right |_{t=0}   \Vol(L_t) = 0$ if and only if $ \delta i_H \omega = 0 $, as needed.
\end{proof}

There is the following criterion due to Y.Dong~\cite{D}, for H-minimality of $G$-invariant Lagrangian embeddings, similar to Theorem~\ref{theo2.3}:
\begin{prop}
Let $G:M$ be a symplectic action by isometries, where $G$ is a compact connected Lie group. Let $i: L \hookrightarrow M$ be a $G$-invariant Lagrangian embedding. Then $i: L \hookrightarrow M$ is H-minimal if and only if volume of $L$ is stationary with respect to all equivariant variations along Hamiltonian vector fields.
\end{prop}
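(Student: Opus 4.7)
The plan is to mimic the proof of Theorem~\ref{theo2.3}, adjusting for the Hamiltonian constraint. The ``only if'' direction is immediate: an H-minimal embedding has stationary volume under all Hamiltonian variations, so in particular under the equivariant ones.

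For the converse, I first observe that $\delta i_H \omega$ is itself a $G$-invariant function on $L$. Indeed, the mean curvature vector $H$ is $G$-invariant because $i$ is $G$-invariant and $G$ acts by isometries; then $i_H \omega$ is a $G$-invariant $1$-form on $L$, and $\delta$ commutes with the isometric $G$-action. The strategy is then to construct, for every smooth $G$-invariant function $\phi$ on $L$, an equivariant Hamiltonian variation of $L$ whose Hamiltonian generating function restricts to $\phi$ on $L$.

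The key step is a $G$-equivariant extension: given such $\phi$, take any smooth extension $\tilde{\phi}$ of $\phi$ to $M$ (using a tubular neighborhood and cut-off), then average over the compact group by $f(x)=\int_G \tilde{\phi}(g\cdot x)\,dg$. By construction $f$ is $G$-invariant on $M$, and since $\phi$ was already $G$-invariant on $L$, the identity $f|_L=\phi$ holds. Because $G$ acts by symplectomorphisms, the relation $\iota_{X_f}\omega=df$ combined with $g^{*}f=f$ and $g^{*}\omega=\omega$ forces $g_{*}X_f=X_f$, so the flow of $X_f$ commutes with the $G$-action and yields an equivariant Hamiltonian variation of $L$. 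The first variation formula of the preceding proposition, combined with the hypothesis, then gives
$$ 0 = \left.\frac{d}{dt}\right|_{t=0}\Vol(L_t) = -\int_L (\delta i_H \omega)\,\phi. $$

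Since this holds for every $G$-invariant $\phi$ on $L$, and $\delta i_H \omega$ is itself $G$-invariant, the admissible choice $\phi = \delta i_H \omega$ yields $\int_L (\delta i_H \omega)^2 = 0$, hence $\delta i_H \omega \equiv 0$ and $L$ is H-minimal. The only delicate point is the equivariant extension of $\phi$ from $L$ to $M$, which is routine thanks to compactness of $G$; the symplectic nature of the $G$-action then automatically ensures the induced Hamiltonian field $X_f$ is $G$-invariant, so the constructed variation legitimately lies in the class for which stationarity was assumed.
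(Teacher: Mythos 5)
Your proof is correct and follows essentially the same route as the paper's: both reduce to the first variation formula $\left.\frac d{dt}\right|_{t=0}\Vol(L_t)=-\int_L\langle \delta i_H\omega, f\rangle$ applied to an equivariant variation along a Hamiltonian vector field generated by a $G$-invariant function, and conclude $\delta i_H\omega\equiv 0$ from the arbitrariness of that function. The only differences are cosmetic: the paper takes the $G$-invariant function $\phi\,\delta i_H\omega$ on $L$ as the Hamiltonian directly (and then uses arbitrariness of $\phi$), whereas you extend $\phi$ itself to $M$ by averaging over $G$ and then specialize to $\phi=\delta i_H\omega$; your explicit extension-and-averaging step makes precise a detail the paper leaves implicit.
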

\begin{proof}

Assume that volume of $L$ is stationary with respect to all equivariant variations along Hamiltonian vector fields.

Let $H$ be the mean curvature vector field on $i(L)$. Because vector field $H$ depends only on embedding $i$, which is $G$-invariant, we have $g_{*}H=H$ for all $g \in G$. Since the action is symplectic, both $i_H\omega$ and $\delta i_H\omega$ are $G$-invariant. Assume that $\phi$ is a smooth $G$-invariant function on $L$. Define the following variation $i_t$, $-\epsilon < t < \epsilon$:
\begin{equation} 
i_t(y)=y_t=exp_{y_0}[tV],
\end{equation}
where $V$ is defined by the condition $JV=\bigtriangledown (\phi \delta i_H\omega)$, which is equivalent to the condition $i_V \omega =  d(\phi \delta i_H \omega)$. We choose $\epsilon > 0$ small enough so that all $i_t$ are embeddings. Notice that for all $g \in G$,
\begin{align}
\begin{split}
& g \circ i_t(y) = g \circ exp_{y_0}[t V(y_0)]
=exp_{g y_0}[t g_* V(y_0)] \\
&=  exp_{g y_0}[t V(g y_0)] = i_t \circ g(y),
\end{split}
\end{align}
because $g y_0 = g i(y) = i (gy)$ and $V$ is $G$-invariant. Hence $i_t$ is an equivariant variation.

Since the variation $i_t$ happens along the vector field $V$, the first variation formula implies
\begin{equation}
    \left. \frac {{d}}{{d}t} \right |_{t=0} \Vol(L_t) = - \int_L \langle H,V \rangle = - \int_L \langle i_H\omega,i_V\omega \rangle = - \int_L \phi |\delta i_H \omega|^2 , 
\end{equation}
which is zero because the variation is equivariant. This, together with arbitrariness of $\phi$, implies $\delta i_H\omega \equiv 0$ which finishes the proof.
\end{proof}

\section{Symplectic reduction and toric varieties}

\subsection{Symplectic reduction}

Let $(M,\omega)$ be a symplectic manifold, $\mathfrak{g} \cong {\mathbb R}^n$ be the Lie algebra of $T^n$, $\mathfrak{g}^* \cong {\mathbb R}^n$ be the dual vector space of $\mathfrak{g}$, and
$\psi : T^n \rightarrow Sympl(M,\omega)$ be a symplectic action. For each $X  \in \mathfrak{g}$, there is the one-parameter subgroup $\{exp(t X) | t \in {\mathbb R}\} \subseteq T^n$ and the corresponding $T^n$-invariant vector field $X^\#$ on~$M$.

\begin{defi}
An action $\psi$ is called Hamiltonian if there exists a moment map
\begin{equation}
\mu : M \rightarrow \mathfrak{g}^*
\end{equation}
satisfying the following property: for each unit basis vector $X_i \in {\mathbb R}^n \cong \mathfrak{g}$, the function $\mu_i$ is a Hamiltonian function for $X_i^\#$, i.e. $i_{X_i^\#}\omega=d \langle \mu (p), X_i \rangle = d \mu_i$
\end{defi}


\begin{exam}
Consider the standard symplectic structure $\omega= - i \sum_{k=1}^m dz_k \wedge d\overline{z_k}$ on ${\mathbb C}^m$. 
Consider the standard torus
\begin{equation}
T^m=\{ (z_1,\ldots,z_m ) \in {\mathbb C}^m | \quad |z_i|=1 \quad \text{for all} \quad 1 \leq i \leq m\} \subset {\mathbb C}^m.
\end{equation}
The coordinate-wise action $T^m:{\mathbb C}^m$ is Hamiltonian, and its moment map is
\begin{equation} 
\mu(z_1,\ldots,z_m)=(|z_1|^2,\ldots,|z_m|^2).
\end{equation}

\end{exam}

\begin{theo}[Symplectic reduction]
Let $(M,\omega,T^n,\mu)$ be a symplectic manifold with a Hamiltonian $T^n$-action. Assume that that the moment map $\mu$ is proper. Let $c$ be a regular value of $\mu$ and and $i:\mu^{-1} (c) \rightarrow M$ be the inclusion map. Assume that $T^n$ acts freely on $\mu^{-1}(c)$. Then:

\medskip

\noindent $\bullet$ the level set $\mu^{-1}(c)$ is a smooth compact $T^n$-invariant submanifold in $M$,

\medskip

\noindent $\bullet$ the orbit space $M_{red}=\mu^{-1}(c) / T^n$ is a manifold,

\medskip

\noindent $\bullet$ $\pi : \noindent \mu^{-1}(c) \rightarrow M_{red}$ is a principle $T^n$ bundle, and

\medskip

\noindent $\bullet$ there is a symplectic form $\omega_{red}$ on $M_{red}$ satisfying $i^* \omega = \pi ^* \omega$.
\end{theo}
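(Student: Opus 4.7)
The plan is to handle the four bullets in sequence, with the work concentrated in the construction of $\omega_{red}$.

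First, I would verify that $\mu^{-1}(c)$ is a smooth compact $T^n$-invariant submanifold. Since $c$ is a regular value, the preimage theorem produces a smooth submanifold of codimension $n$; compactness is immediate from properness of $\mu$. Invariance follows because $T^n$ is abelian, so the moment map is equivariant with respect to the trivial coadjoint action; concretely, $i_{X_i^\#}\omega = d\mu_i$ forces each $\mu_i$ to be constant along $T^n$-orbits, hence each level set is $T^n$-stable.

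Second, I would invoke the quotient manifold theorem. The action of $T^n$ on $\mu^{-1}(c)$ is free by hypothesis and automatically proper because $T^n$ is compact, so $M_{red} = \mu^{-1}(c)/T^n$ inherits a smooth structure of dimension $\dim M - 2n$ for which $\pi$ is a smooth submersion. The slice theorem for compact group actions then upgrades $\pi$ to a principal $T^n$-bundle.

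The key step is the construction of $\omega_{red}$. Everything rests on the linear-algebraic identity
\[
T_p\mu^{-1}(c) = \bigl(T_p(T^n\cdot p)\bigr)^\omega
\]
at each $p\in\mu^{-1}(c)$, which is a direct reading of $i_{X_i^\#}\omega = d\mu_i$: a tangent vector $v$ lies in $\ker d\mu_p$ iff $\omega(X_i^\#,v)=0$ for every $i$. From this identity, vectors tangent to $T^n$-orbits pair trivially with $T_p\mu^{-1}(c)$ under $\omega$, so $i^*\omega$ is horizontal. Combined with $T^n$-invariance of $i^*\omega$ (the action is symplectic), this makes $i^*\omega$ basic and so descends to a unique 2-form $\omega_{red}$ on $M_{red}$ with $\pi^*\omega_{red}=i^*\omega$. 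Closedness is immediate from $\pi^*d\omega_{red}=d\,i^*\omega=i^*d\omega=0$ together with injectivity of $\pi^*$ on forms.

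The main obstacle is non-degeneracy of $\omega_{red}$. Take $v \in T_{[p]}M_{red}$ with $\omega_{red}(v,\cdot)=0$ and lift to $\tilde v \in T_p\mu^{-1}(c)$. Then $\omega(\tilde v,w)=0$ for all $w \in T_p\mu^{-1}(c)$, so $\tilde v$ lies in the double symplectic orthogonal of $T_p(T^n\cdot p)$, which equals $T_p(T^n\cdot p)$ itself by non-degeneracy of $\omega$. Hence $\tilde v$ is vertical and $v=d\pi(\tilde v)=0$. The subtle point here is the use of the double-orthogonal identity, which in turn depends on the freeness hypothesis to guarantee that $T_p(T^n\cdot p)$ has full rank $n$; without freeness the orbit directions could degenerate and the argument would break down.
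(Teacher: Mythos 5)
Your proof is correct and is essentially the standard Marsden--Weinstein argument; note, however, that the paper does not prove this theorem at all --- it states it and defers the proof to \cite{MW}, so you are supplying an argument where the source gives only a citation. All the main steps check out: the preimage theorem plus properness for the first bullet, the quotient manifold/slice theorem for the second and third, the identity $T_p\mu^{-1}(c)=\bigl(T_p(T^n\cdot p)\bigr)^{\omega}$ for descent of $i^*\omega$, injectivity of $\pi^*$ for closedness, and the double symplectic orthogonal for non-degeneracy. Two small points are worth tightening. First, in the invariance step, $X_j^{\#}\mu_i=\omega(X_i^{\#},X_j^{\#})$ is a priori only locally constant (it is the Hamiltonian of $[X_i,X_j]^{\#}=0$); it vanishes either because equivariance is built into the definition of the moment map or because a function with constant derivative along a circle orbit must have zero derivative --- worth one sentence. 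Second, your closing remark slightly misattributes the role of freeness: the identity $(W^{\omega})^{\omega}=W$ holds for \emph{every} subspace of a symplectic vector space and does not require the orbit to have full rank; regularity of $c$ already forces the orbit to be $n$-dimensional (the action is automatically locally free on a regular level set), and freeness is needed only so that $\mu^{-1}(c)/T^n$ is a manifold rather than an orbifold.
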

\begin{rema}
If $M$ is compact then the moment map $\mu$ is proper.
\end{rema}
\begin{rema}
There is a more general version of symplectic reduction for Hamiltonian actions of compact connected Lie groups, which definition we did not give. The proof can be found in~\cite{MW}.
\end{rema}

\subsection{Moment-angle manifolds}
The details of the following construction can be found in~\cite[Construction 6.1.1]{BP}.
\begin{cons}
Consider a presentation of a convex polytope
\begin{equation}
P=P(A,\boldsymbol{b})=\{ \boldsymbol{x}  \in {\mathbb R}^n: \langle \boldsymbol{a}_i,\boldsymbol{x} \rangle + b_i \geq 0, \quad 1 \leq i \leq m   \}.
\end{equation}
Let $A$ be the $n\times m$-matrix with columns $\boldsymbol{a}_1,\dots,\boldsymbol{a}_m$.
The map
\begin{equation}
i_{A,\boldsymbol{b}}: {\mathbb R}^n \rightarrow {\mathbb R}^m, \quad i_{A,\boldsymbol{b}}(\boldsymbol{x})=A^t \boldsymbol{x} + \boldsymbol{b}=
(\langle \boldsymbol{a}_1,\boldsymbol{x} \rangle + b_1,\ldots,\langle \boldsymbol{a}_m,\boldsymbol{x} \rangle + b_m )^t 
\end{equation}
embeds $P$ into  ${\mathbb R}^m_{\geq}$. Define the space $Z_{A,\boldsymbol{b}}$ from the commutative diagram
\begin{equation}
\begin{CD}
Z_{A,\boldsymbol{b}}  @>i_{Z}>>                  {\mathbb C}^m \\
@VVV                                              @VV\mu V      \\
P               @>i_{A,\boldsymbol{b}}>>         {\mathbb R}^m_{\geq}
\end{CD}
\end{equation}
where $\mu(z_1,\ldots,z_m)=(|z_1|^2,\ldots,|z_m|^2)$ is the moment map of the coordinate-wise action of $T^m$ on ${\mathbb C}^m$.

Let $\Gamma$ be the matrix whose rows form a basis of the space $\{\boldsymbol{y} \in {\mathbb R}^m : \boldsymbol{y} A^t =0\}$. Then $\Gamma$
is an $(m-n)\times m$ matrix with maximal rank $m-n$ satisfying $\Gamma A^t=0$. The set of columns $(\boldsymbol{\gamma}_1,\ldots,\boldsymbol{\gamma}_m)$ of $\Gamma$ is called a Gale dual of the set $(\boldsymbol{a}_1,\ldots,\boldsymbol{a}_m)$ of columns of $A$. Now  we can write
\begin{equation}
i_{A,\boldsymbol{b}}({\mathbb R}^n)= \{ \boldsymbol{x} \in {\mathbb R}^m | \Gamma \boldsymbol{x}= \Gamma \boldsymbol{b} \}.
\end{equation}
Then the image of $Z_{A,\boldsymbol{b}}$ in ${\mathbb C}^m$ by the map $i_Z$ can be expressed as an intersection of Hermitian quadrics:
\begin{equation}
Z_\Gamma=i_Z (Z_{A,\boldsymbol{b}} )=\{(z_1,\dots,z_m) \in {\mathbb C}^m : \sum_{k=1}^m \gamma_{jk}|z_k|^2=\sum_{k=1}^m \gamma_{jk}b_k,  \quad 1\leq j \leq m-n \}.
\end{equation}
The space $Z(A,\boldsymbol{b})$ is a smooth manifold if and only if the polytope $P$ is simple (i.e. all its intersecting faces are in general position, which gives the condition of transversal intersection of quadrics), see~\cite[Theorem 6.1.3]{BP}. In this case $Z(A,\boldsymbol{b})$ is called a {\itshape moment-angle manifold}.
\end{cons}
Further we will always assume that $P$ is a simple polytope and use the notation $Z_\Gamma$ as well as $Z_P$ for the manifold $Z_{A,\boldsymbol{b}}$. Notice that the moment-angle manifold is compact since $P$ is bounded.

Real moment-angle manifolds are defined similarly from the commutative diagram
\begin{equation}
\begin{CD}
R_{A,\boldsymbol{b}}  @>i_{R}>>                  {\mathbb R}^m \\
@VVV                                              @VV\mu V      \\
P               @>i_{A,\boldsymbol{b}}>>         {\mathbb R}^m_{\geq}
\end{CD}
\end{equation}
where $\mu(y_1,\ldots,y_m)=(|y_1|^2,\ldots,|y_m|^2)$ is the restriction of the moment map. The manifold $R_P = R(A,\boldsymbol{b})$ can be written as an intersection of real quadrics:
\begin{equation}
R_{\Gamma}=\{ u=(u_1,\dots,u_m) \in {\mathbb R}^m : \sum_{k=1}^m \gamma_{jk} u_{k} ^2 = \sum_{k=1}^m \gamma_{jk}b_k, \quad   1 \le j \le m-n \}.
\end{equation}
There is an obvious inclusion  $R_{A,\boldsymbol{b}}  \hookrightarrow  Z_{A,\boldsymbol{b}}$.

The action $T^m : {\mathbb C}^m $ restricts to an action $T^m : Z_\Gamma$ and similarly the action ${\mathbb Z}_2^m : {\mathbb R}^m$ restricts to an action ${\mathbb Z}_2^m : R_\Gamma$. Both orbit spaces are identified with the initial polytope: $Z_\Gamma/T^m = R_\Gamma / {\mathbb Z}_2^m \cong P $.

\begin{exam}
Let $P$ be a triangle defined by equations
\begin{equation}
\left \{
\begin{aligned}
&x_1 \geq 0,\\
&x_2 \geq 0,\\
&1-x_1-x_2 \geq 0.\\
\end{aligned}
\right.
\end{equation}

Then the map $i_{A,\boldsymbol{b}}$ is given by the formula $i_{A,\boldsymbol{b}}(x_1,x_2)=(x_1,x_2,1-x_1-x_2)$ and $i_{A,\boldsymbol{b}}(P)={\mathbb R}^3_\ge \cap \{y_1+y_2+y_3=1\}$. Thus
\begin{equation}
\begin{aligned}
&Z_\Gamma=\{(z_1,z_2,z_3)\in {\mathbb C}^3||z_1|^2+|z_2|^2+|z_3|^2=1 \} \cong S^5,\\
&R_\Gamma=\{(y_1,y_2,y_3)\in {\mathbb R}^3|y_1^2+y_2^2+y_3^2=1 \} \cong S^2.\\
\end{aligned}
\end{equation}

\end{exam}

\subsection{Symplectic toric varieties}
We endow ${\mathbb C}^m$ with a standard symplectic structure $\omega= - i \sum_{k=1}^m dz_k \wedge d\overline{z_k}$. Recall that the action $T^m \colon {\mathbb C}^m$ is Hamiltonian and its moment map is given by the formula $ \mu(z_1,\ldots,z_m)=(|z_1|^2,\ldots,|z_m|^2) $.

Assume that the vectors $\boldsymbol{a_1},\ldots,\boldsymbol{a_m}$ span a lattice $N={\mathbb Z}\langle \boldsymbol{a_1},\ldots,\boldsymbol{a_m} \rangle \subset {\mathbb R}^n$. It is easily seen that then the columns $\boldsymbol{\gamma}_1,\ldots,\boldsymbol{\gamma}_m$ of matrix $\Gamma$ span a lattice $L={\mathbb Z}\langle \boldsymbol{\gamma_1},\ldots,\boldsymbol{\gamma_m} \rangle \subset {\mathbb R}^{m-n}$. Because ${\mathbb R} \langle  \boldsymbol{a_1},\ldots,\boldsymbol{a_m} \rangle =   {\mathbb R} ^n$ we have $N  \cong {\mathbb Z}^n$ and $L \cong  {\mathbb Z}^{m-n}$. Consider the following subgroup of $T^m$:
\begin{equation}
T_\Gamma={\mathbb R} ^{m-n} / L^*= \{(e^{2\pi i (\gamma_1,\phi)},\dots,e^{2\pi i (\gamma_m,\phi)}) \in {\mathbb T}^m \} \cong {\mathbb T}^{m-n},
\end{equation}
where $\phi \in {\mathbb R}^{m-n} $ and $L^* = \{ \lambda^* \in {\mathbb R}^{m-n} \colon (\lambda^*,\lambda) \in  {\mathbb Z} \quad \text{for all} \quad \lambda \in L\}$
is the dual lattice. The real analogue of $T_\Gamma$ is the discrete group $D_\Gamma = \frac{1}{2} L^* /L^* \cong ({\mathbb Z}/2)^{m-n}$, it canonically embeds as a subgroup in $T_{\Gamma}$. The restricted action of $T_\Gamma \subset T^m$ on ${\mathbb C}^m$ is also Hamiltonian, and the corresponding moment map is the composition
\begin{equation} 
\mu_\Gamma \colon {\mathbb C}^m \rightarrow {\mathbb R}^m \rightarrow \mathfrak{t}^{*}_\Gamma,
\end{equation}
where $ {\mathbb R}^m \rightarrow \mathfrak{t}^*_\Gamma$ is the map of the dual Lie algebras corresponding to the inclusion $T_\Gamma \hookrightarrow T^m$.
The map $ {\mathbb R}^m \rightarrow \mathfrak{t}^*_\Gamma$  sends the $i$-th basis vector $e_i \in {\mathbb R}^m$ to $\gamma_i \in \mathfrak{t}^*_\Gamma \cong {\mathbb R}^{m-n}$. Hence $\mu_\Gamma$ is obtained by composing the standard moment map $\mu$ with $\Gamma$. That is,
\begin{equation}
\mu_\Gamma(z_1,\ldots,z_m)= \Bigl( \sum_{k=1}^m \gamma_{1k}|z_k|^2,\ldots,\sum_{k=1}^m \gamma_{(m-n) k}|z_k|^2 \Bigr)
\end{equation}

The level set $\mu_\Gamma^{-1}(\Gamma \boldsymbol{b})$ is exactly the moment-angle manifold $Z_\Gamma$. From now on we assume that $T_\Gamma$ acts freely on $Z_\Gamma$, which is equivalent to the property of $P$ being Delzant (i.e. $P$ is simple and for every vertex the corresponding normal vectors of adjacent facets span the whole lattice: ${\mathbb Z} \langle \boldsymbol{a}_{i_1},\ldots,\boldsymbol{a}_{i_n} \rangle = N$).
The following theorem with proof can be found in~\cite[Theorem 6.3.1]{BP}.
\begin{theo}
Let $P=P(A,\boldsymbol{b})$ be a Delzant polytope, $\Gamma=(\boldsymbol{\gamma}_1,...\boldsymbol{\gamma}_m)$ the corresponding Gale dual configuration of vectors in ${\mathbb R}^{m-n}$, which defines the moment-angle manifold $Z_P=Z_\Gamma=Z_{A,\boldsymbol{b}}$. Then:

\medskip

\noindent $\bullet$ $\Gamma \boldsymbol{b}$ is a regular value of the proper moment map $\mu_\Gamma \colon{\mathbb C}^m \rightarrow \mathfrak{t}^*_\Gamma \cong {\mathbb R}^{m-n}$,

\medskip

\noindent $\bullet$ $Z_P$ is the regular level set $\mu_\Gamma^{-1}(\Gamma \boldsymbol{b})$,

\medskip

\noindent $\bullet$ the action $T_\Gamma$ on $Z_P$ is free.
\end{theo}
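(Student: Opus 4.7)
The plan is to dispatch the three bullets in order, each reducing to Gale duality combined with the simplicity or Delzant hypothesis on $P$. The identification $Z_P=\mu_\Gamma^{-1}(\Gamma\boldsymbol{b})$ in the second bullet is immediate from the explicit formula for $\mu_\Gamma$ stated just before the theorem: the equations cutting out $\mu_\Gamma^{-1}(\Gamma\boldsymbol{b})$ are precisely the Hermitian quadrics appearing in the construction of $Z_\Gamma$.

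For properness of $\mu_\Gamma$ I argue by contradiction. Suppose $z^{(\ell)}\in\mathbb{C}^m$ has $|z^{(\ell)}|\to\infty$ while $\mu_\Gamma(z^{(\ell)})$ stays bounded. The rescaled vectors $\mu(z^{(\ell)})/|\mu(z^{(\ell)})|$ have a subsequential limit $\tilde y\in\mathbb{R}^m_{\geq 0}$ of unit length satisfying $\Gamma\tilde y=0$. Since $\ker\Gamma=\mathrm{im}\,A^t$, write $\tilde y=A^t\tilde x$; then $\tilde x\neq 0$ lies in the recession cone of $P$, contradicting boundedness. For regularity at $\Gamma\boldsymbol{b}$, fix $z\in Z_P$ and put $I(z)=\{k:z_k=0\}$. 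The image of $d\mu|_z$ is $\mathrm{span}\{e_k:k\notin I(z)\}\subset\mathbb{R}^m$, so the image of $d\mu_\Gamma|_z=\Gamma\cdot d\mu|_z$ is $\mathrm{span}\{\boldsymbol{\gamma}_k:k\notin I(z)\}$. Now $\mu(z)$ corresponds under $i_{A,\boldsymbol{b}}^{-1}$ to a point $x\in P$ whose active inequality constraints are exactly those indexed by $I(z)$, so $I(z)$ indexes the facets through a face containing $x$. Simplicity of $P$ forces $\{\boldsymbol{a}_k\}_{k\in I(z)}$ to be linearly independent in $\mathbb{R}^n$, and real Gale duality turns this into $\{\boldsymbol{\gamma}_k\}_{k\notin I(z)}$ spanning $\mathbb{R}^{m-n}$, giving surjectivity of $d\mu_\Gamma|_z$.

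For freeness, the explicit action formula shows that a class $[\phi]\in T_\Gamma=\mathbb{R}^{m-n}/L^*$ fixes $z\in Z_P$ if and only if $(\boldsymbol{\gamma}_k,\phi)\in\mathbb{Z}$ for every $k\notin I(z)$. Every such $z$ projects to a point of $P$ lying in the closure of some vertex $v$, so $I(z)\subseteq I(v)$ with $|I(v)|=n$. The Delzant condition at $v$ says $\{\boldsymbol{a}_k\}_{k\in I(v)}$ is a $\mathbb{Z}$-basis of $N$; an integer Gale duality argument then shows that $\{\boldsymbol{\gamma}_k\}_{k\notin I(v)}$ is a $\mathbb{Z}$-basis of $L$. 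Since this subcollection is contained in $\{\boldsymbol{\gamma}_k\}_{k\notin I(z)}$, the integrality condition on $\phi$ already forces $\phi\in L^*$, hence $[\phi]=0$ in $T_\Gamma$.

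The main obstacle is this integer Gale duality step: one needs the precise statement that for a partition $\{1,\dots,m\}=I\sqcup J$ with $|I|=n$, the set $\{\boldsymbol{a}_k\}_{k\in I}$ is a $\mathbb{Z}$-basis of $N$ if and only if $\{\boldsymbol{\gamma}_k\}_{k\in J}$ is a $\mathbb{Z}$-basis of $L$. This can be obtained by dualizing the short exact sequence $0\to L^*\to\mathbb{Z}^m\to N\to 0$ attached to $e_k\mapsto\boldsymbol{a}_k$, but setting up the isomorphism carefully requires more work than the purely real-linear arguments of the other steps.
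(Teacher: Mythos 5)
Your argument is correct, and it is essentially the standard proof: note that the paper itself does not prove this theorem but cites it from [BP, Theorem 6.3.1], and your three steps (properness via the recession cone, regularity via real Gale duality plus simplicity, freeness via integer Gale duality plus the Delzant condition at vertices) reproduce the argument given there. The one piece you leave as a sketch, the lattice-level Gale duality statement that $\{\boldsymbol{a}_k\}_{k\in I}$ is a $\mathbb{Z}$-basis of $N$ iff $\{\boldsymbol{\gamma}_k\}_{k\notin I}$ is a $\mathbb{Z}$-basis of $L$ for $|I|=n$, is exactly the right lemma and does follow from dualizing the short exact sequence of lattices as you indicate; writing that out would complete the proof.
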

\begin{rema}
The conditions for applying symplectic reduction are as follows: the moment map is proper, the action is free and the level set is regular. These conditions are equivalent to the properties of $Z_\Gamma$ being bounded, $P$ being Delzant and $Z_\Gamma$ being a non-empty manifold, respectively.
\end{rema}

By applying symplectic reduction, we get a manifold $V_P=Z_P/T_\Gamma$. It is canonically isomorphic to a toric manifold $V_{\Sigma_P}$, which corresponds to the normal fan $\Sigma_P$ of the polytope $P$ (see~\cite[Theorem 5.5.4]{BP}). Manifolds $V_{\Sigma_P}$ obtained in this way are called {\itshape symplectic toric manifolds}.

A toric manifold $V_{\Sigma_P}$ is a projective algebraic variety. The reduced symplectic form $\omega_{red}$ and metric induced by the Riemannian submersion  $Z_P \rightarrow V_P$ are equivalent to the symplectic form induced by the projective embedding and the metric coming from the algebraic structure. Let us also notice the beautiful fact that the set of Delzant polytopes is in bijective correspondence with the set of equivariant symplectomorphism classes of symplectic toric manifolds~\cite{Delz}.

The important fact is that $R_P$ projects on real toric manifold $U_P$, which is a set of real points of complex toric manifold. The fiber is $D_\Gamma$. The main results of this paragraph can be illustrated by the following diagram~\eqref{diag1}, where fibers of the projections $\pi$ and $r$ are $T_\Gamma$ and $D_\Gamma$, and all dimensions are real except of ${\mathbb C}^{m}$.

\begin{equation}\label{diag1}
\begin{tikzcd}
R_P^n \arrow[hookrightarrow]{r} \arrow[rightarrow]{d}{r}
  & Z_P^{m+n} \arrow[hookrightarrow]{r}{i}\arrow[rightarrow]{d}{\pi}
  & {\mathbb C}^{m} \\
U_P^n \arrow[hookrightarrow]{r}
  & V_P^{2n}
\end{tikzcd}
\end{equation}

\section{Minimal submanifolds in moment-angle manifolds}

We fix our notation as on the diagram~\eqref{diag1}.

\begin{prop}\label{prop1}
The real toric manifold $U_P$ is a minimal submanifold of $V_P$.
\end{prop}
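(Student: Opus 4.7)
The plan is to exhibit $U_P$ as (a union of connected components of) the fixed locus of an isometric involution of $V_P$, after which classical Riemannian geometry gives that the fixed locus is totally geodesic, and hence minimal.

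First I would take complex conjugation $\sigma \colon \mathbb{C}^m \to \mathbb{C}^m$, $\sigma(z) = \bar z$. This is an antiholomorphic isometry of the standard Kaehler structure, and it preserves $Z_\Gamma$ because the defining equations $\sum_k \gamma_{jk}|z_k|^2 = \sum_k \gamma_{jk} b_k$ are real. Its fixed locus on $Z_\Gamma$ is precisely $R_\Gamma$.

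Next I would check that $\sigma$ descends through the symplectic reduction $\pi \colon Z_P \to V_P = Z_P/T_\Gamma$. For $t \in T_\Gamma \subset T^m$ one has $\sigma(t\cdot z) = t^{-1}\cdot\sigma(z)$, since complex conjugation inverts elements of the compact torus, so $\sigma$ carries $T_\Gamma$-orbits to $T_\Gamma$-orbits. The induced involution $\tilde\sigma \colon V_P \to V_P$ is an isometry of the reduced Kaehler metric: $\sigma$ is an isometry of $Z_P$ intertwining the $T_\Gamma$-action via inversion, so it sends horizontal distributions of $\pi$ to themselves, and the reduced metric is computed from horizontal lifts.

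Then I would observe that $U_P = \pi(R_P) \subseteq \operatorname{Fix}(\tilde\sigma)$ automatically, with $\dim_{\mathbb{R}} U_P = n = \tfrac12 \dim_{\mathbb{R}} V_P$. Because $\tilde\sigma$ is antiholomorphic with respect to the induced complex structure on $V_P$, its fixed set is locally modelled on the real part of a complex chart and hence has exactly half the real dimension of $V_P$; combined with the dimension count, this forces $U_P$ to be open in $\operatorname{Fix}(\tilde\sigma)$, i.e.\ a union of connected components. Finally I would invoke the classical fact that each connected component of the fixed set of an isometry is a closed totally geodesic submanifold (any geodesic with initial data fixed by $d\tilde\sigma_p$ stays in the fixed set). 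Totally geodesic implies vanishing second fundamental form, and \emph{a fortiori} vanishing mean curvature, so $U_P \hookrightarrow V_P$ is minimal.

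The hard part will be matching the complex structure on $V_P$ produced by symplectic reduction with the one that sees $\tilde\sigma$ as antiholomorphic, so as to secure the dimension identification and conclude that $U_P$ fills out an entire union of components of $\operatorname{Fix}(\tilde\sigma)$ rather than merely sitting inside one. The alternative route via Hsiang--Lawson (Theorem~\ref{theo2.3}) is not directly available here, since $U_P$ carries no continuous torus action inside $V_P$ — only the discrete group $\mathbb{Z}_2^m/D_\Gamma$ acts — and equivariance under a discrete group provides no reduction in the Hsiang--Lawson argument.
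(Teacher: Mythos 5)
Your proposal follows essentially the same route as the paper: both realize $U_P$ inside the fixed locus of the isometric involution of $V_P$ induced by complex conjugation on $Z_P\subset\mathbb{C}^m$, and conclude minimality from the classical fact that fixed components of isometries are totally geodesic. The extra care you take (descent of $\sigma$ through the reduction, the dimension count identifying $U_P$ with a union of components) only fills in details the paper leaves as ``easily seen.''
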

\begin{proof}
The real toric manifold $U_P$ is the fixed point set under isometric involution $\sigma \colon V_P$ induced by the complex conjugation on $Z_P \subset {\mathbb C}^m$. It is easily seen that $U_P$ is {\itshape totally geodesic} in $V_P$, i.e. all geodesics in  $U_P$ are geodesics in $V_P$. Therefore $U_P$ is minimal in $V_P$, because being totally geodesic means that the second quadratic form is zero, and minimality means that its trace is zero, i.e. mean curvature vector field vanishes everywhere (see~\cite[Chapter 1]{L} for the details).
\end{proof}

Define the function $\Vo \colon V_P \rightarrow {\mathbb R}$ as a volume of an orbit: 
\begin{equation}
\Vo(x)=\Vol(\pi^{-1}(x)).
\end{equation}
\begin{prop}\label{prop2}
The real toric manifold $U_P$ is minimal in $V_P$ with respect to the metric $\tilde{g}=Vo^{2/n} g$.
\end{prop}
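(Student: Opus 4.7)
The plan is to combine Proposition~\ref{prop1} (which gives $H\equiv 0$ for $U_P\subset(V_P,g)$) with the standard conformal-change formula for mean curvature. Recall that for a submanifold $L^k\subset(M,g)$ and a rescaling $\tilde g=e^{2f}g$, tracing the identity $\tilde\nabla_XY=\nabla_XY+X(f)Y+Y(f)X-g(X,Y)\nabla f$ against the induced metric yields
$$\tilde H = e^{-2f}\bigl(H-k(\nabla f)^{\perp}\bigr),$$
where $\nabla f$ is the $g$-gradient and $(\cdot)^{\perp}$ denotes $g$-orthogonal projection onto the normal bundle. In our situation $\dim U_P=n$ and $e^{2f}=\Vo^{2/n}$, so $f=\tfrac{1}{n}\log\Vo$; together with $H\equiv 0$ from Proposition~\ref{prop1}, the desired statement $\tilde H\equiv 0$ reduces to showing that $\nabla\Vo$ is tangent to $U_P$ along $U_P$, i.e.\ $(\nabla\Vo)^{\perp}=0$ on $U_P$.

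I would establish this by exploiting the isometric involution $\sigma\colon V_P\to V_P$ already used in Proposition~\ref{prop1}. Complex conjugation $c\colon{\mathbb C}^m\to{\mathbb C}^m$ is an isometry, preserves $Z_P$, and normalizes $T_\Gamma$ (because $c\circ t\circ c^{-1}=t^{-1}\in T_\Gamma$ for every $t\in T_\Gamma$), so it descends to $\sigma$ and sends each $T_\Gamma$-fiber of $\pi$ isometrically onto the $T_\Gamma$-fiber over $\sigma(x)$. Consequently $\Vo\circ\sigma=\Vo$. At any $x\in U_P$, the involution $d\sigma_x$ splits $T_xV_P$ into $(\pm 1)$-eigenspaces $E_{\pm}$, and since $U_P$ is the fixed locus with $\dim U_P=n=\tfrac{1}{2}\dim V_P$, one has $T_xU_P=E_+$ and consequently $N_xU_P=E_-$. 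For any normal $v\in N_xU_P$ the chain $d\Vo_x(v)=d\Vo_x(d\sigma_xv)=-d\Vo_x(v)$ forces $d\Vo_x(v)=0$, which is exactly $(\nabla\Vo)^{\perp}=0$.

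The main obstacle here is conceptual rather than computational: one has to recognize that the ambient symmetry $\sigma$, already used to secure $H\equiv 0$, simultaneously kills the conformal correction term through its action on the fiber-volume function. Once this viewpoint is adopted, both supporting facts---the conformal-change formula and the $\sigma$-invariance of $\Vo$---are essentially immediate from the setup, and no further computation is required.
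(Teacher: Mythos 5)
Your argument is correct, but it takes a different route from the paper's. Both proofs hinge on the same key fact, namely that the fibre-volume function satisfies $\Vo\circ\sigma=\Vo$ because complex conjugation normalizes $T_\Gamma$ and is an isometry of ${\mathbb C}^m$. The paper, however, exploits this fact more directly: since $\Vo$ is $\sigma$-invariant, the involution $\sigma$ remains an isometry of $(V_P,\tilde g)$, so $U_P$ is again the fixed-point set of an isometric involution, hence totally geodesic and in particular minimal --- the proof of Proposition~\ref{prop1} simply repeats verbatim for $\tilde g$. You instead keep the original metric $g$ as the reference, invoke the conformal-change formula $\tilde H = e^{-2f}\bigl(H-k(\nabla f)^{\perp}\bigr)$ with $f=\tfrac1n\log\Vo$, and then use the $\pm1$-eigenspace decomposition of $d\sigma_x$ to show that the correction term $(\nabla\Vo)^{\perp}$ vanishes along $U_P$. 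Your computation is sound (the eigenspace argument correctly identifies $N_xU_P$ with $E_-$, and $\sigma$-invariance of $\Vo$ kills $d\Vo$ on $E_-$), and it has the virtue of isolating exactly what is needed --- it would apply to any minimal, not necessarily totally geodesic, submanifold fixed by a symmetry preserving the conformal factor. The paper's argument is shorter and yields the stronger conclusion that $U_P$ is totally geodesic, not merely minimal, in the rescaled metric; yours requires the conformal-change formula as an external input but makes the mechanism (cancellation of the gradient correction by symmetry) explicit.
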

\begin{proof}
The key observation is that the function $\Vo$ (volume of the preimage) is invariant under the involution $\sigma \colon V_P$ induced by the complex conjugation. This is because conjugation is isometry on $\mathbb{C}^m$. In fact the volume of an orbit depends only on absolute values of the coordinates of points in the orbit. Thus we have
\begin{align}
\begin{split}
& \Vo(x)=\Vol(\pi^{-1}(x))=\Vol(T_\Gamma(z_1,\ldots ,z_m))=\Vol(|z_1|,\ldots,|z_m|)=\\
&=\Vol(|\overline{z_1}|,\ldots,|\overline{z_m}|)=\Vol(T_\Gamma(\overline{z_1},\ldots ,\overline{z_m}))=\Vol(\pi^{-1}(\sigma x))=\Vo(\sigma x).
\end{split}
\end{align}
Hence the involution $\sigma$ is isometric not only with respect to the metric $g$, but also with respect to the metric $\tilde{g}=Vo^{2/n} g$. This means that the argument from Proposition~\ref{prop1} works also in the case of metric $\tilde{g}$.
\end{proof}

We will also need the following generalization of Noether's theorem:
\begin{theo}\label{theo4.3}
Let $(M,\omega,T,\mu)$ be a symplectic manifold with a Hamiltonian torus action. Let $X$ be a Hamiltonian $T$-invariant vector field. Then the moment map $\mu$ is constant along $X$.
\end{theo}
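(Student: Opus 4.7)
My plan is to reduce the statement to showing that $X$ admits a $T$-invariant Hamiltonian function. First I would write $i_X\omega = df$ for some smooth $f$, and combine this with the defining identity $i_{Y^\#}\omega = d\mu_Y$ of the moment map (here $\mu_Y = \langle \mu, Y\rangle$ for $Y \in \mathfrak{t}$) to compute directly
\[
X(\mu_Y) \;=\; d\mu_Y(X) \;=\; \omega(Y^\#, X) \;=\; -df(Y^\#) \;=\; -Y^\#(f).
\]
This identity reduces the theorem to showing that the Hamiltonian function for $X$ can be chosen $T$-invariant, since then $Y^\#(f) = 0$ for every $Y \in \mathfrak{t}$ and hence every component of $\mu$ is constant along $X$.

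The remaining point, which is the only non-formal ingredient, is the construction of a $T$-invariant primitive. I would do this by averaging over the compact torus: start from any $f_0$ with $i_X\omega = df_0$, and put
\[
f(x) \;=\; \int_T f_0(g\cdot x)\, dg
\]
against the normalized Haar measure on $T$. Because both $X$ and $\omega$ are $T$-invariant, the one-form $i_X\omega$ is $T$-invariant, so pullback by $g$ commutes with the differential of $f_0$ under the integral, and I would conclude $df = i_X\omega$. Thus $f$ is still a Hamiltonian function for $X$, and is manifestly $T$-invariant, which closes the argument.

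I do not expect a serious obstacle here; the only care needed is bookkeeping the sign conventions for $i_{Y^\#}\omega$, $d\mu_Y$ and the skew-symmetry of $\omega$ consistently with the rest of the paper. Conceptually the statement is a symplectic form of Noether's theorem: a $T$-invariant Hamiltonian flow commutes with every $Y^\#$, and the associated conserved quantities are precisely the components $\mu_Y$ of the moment map.
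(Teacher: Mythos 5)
Your proof is correct and follows essentially the same route as the paper: the identical chain of equalities $X(\mu_Y)=d\mu_Y(X)=\omega(Y^{\#},X)=-df(Y^{\#})=-Y^{\#}(f)$, followed by the observation that $Y^{\#}(f)=0$. The only cosmetic difference is that you secure a $T$-invariant Hamiltonian by averaging over $T$, whereas the paper notes that $df=i_X\omega$ is already $T$-invariant (so, $T$ being connected, $f$ is $T$-invariant up to locally constant functions); both close the argument.
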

\begin{proof}
The proof is a sequence of equalities:
\begin{equation}
X(\mu_i) =i_X d \mu_i=  i_X i_{X^{\#}_i} \omega = - i_{X^\#_i} i_X \omega = - i_{X^\#_i}  d f = - X^\#_i(f) = 0,
\end{equation}
because the vector field $X$ is $T$-invariant and therefore the 1-form ${d}f=i_X \omega$ is $T$-invariant.
\end{proof}

Now define a submanifold 
\begin{equation}
N= \pi ^{-1}(U_P) \cong R_P \times_{D_\Gamma} T_\Gamma  
\end{equation}
of the moment-angle manifold $Z_P \subset {\mathbb C}^m $.
\begin{theo}\label{theoA}
The submanifold $N$ is minimal in $Z_P$.
\end{theo}
\begin{proof}
The action $T_\Gamma \colon Z_P$ is free and $Z_P/T_\Gamma=V_P$. Hence there is a bijection:
\begin{equation}
\left \{
\begin{aligned}
& T_{\Gamma}  \text {-invariant horizontal} \\
&\text{vector fields on} \quad  Z_P \\
\end{aligned}
\right \}
\longleftrightarrow
\left \{
\begin{aligned}
&\text{vector fields} \\
&\text{on} \quad V_P \\
\end{aligned}
\right \}
\end{equation}
Assume that $i_t$ is a $T_\Gamma$-invariant variation of the natural embedding $i \colon N\hookrightarrow Z_P$. By the definition of $\tilde{g}$,
\begin{equation}
\Vol(N_t,g)=\Vol(\pi(N_t),\tilde{g}).
\end{equation}
Comparing $T_\Gamma$-invariant variations of $N$ in $Z_P$ and variations of $\pi(N)= U_P$ in $V_P$, we obtain by minimality of $U_P$ w.r.t. metric $\tilde{g}$ (Proposition~\ref{prop2}) that the volume of $N$ is stationary with respect to all $T_\Gamma$-invariant variations. This together with Theorem~\ref{theo2.3} implies the minimality of~$N$.
\end{proof}

\begin{theo}\label{theoB}
The manifold $N$ is an H-minimal Lagrangian submanifold of ${\mathbb C}^m $.
\end{theo}
\begin{proof}
We have a sequence of embeddings: $N \hookrightarrow Z_P \hookrightarrow {\mathbb C}^m$.

We first prove that $N$ is a Lagrangian submanifold in ${\mathbb C}^m$. For any $x \in N$, there is a decomposition of the tangent space $\mathcal T_x N=\mathcal T_{R_\Gamma} \oplus \mathcal T_{T_\Gamma}$ into the sum of the tangent space of the real moment-angle manifold and the tangent subspace along the torus action. The canonical symplectic form $\omega= - i \sum_{k=1}^m dz_k \wedge d\overline{z_k}$ on ${\mathbb C}^m $ vanishes on the tangent subspace $\mathcal T_{R_\Gamma}$, because there are only real vectors (tangent to $R_\Gamma$) in $\boldsymbol{z} \cdot \mathcal T_{R_\Gamma}$ for some $\boldsymbol{z} \in T_\Gamma$. Now assume that $X_i \in \mathcal T_{T_\Gamma}$ and $Y \in  \mathcal T_x N$. Then
\begin{equation} 
\omega(X_i,Y)=i_{X_i} \omega (Y)=d \mu_i (Y)= Y(\mu_i) =0, 
\end{equation}
since $Y \in  \mathcal T_x N$ and therefore $\mu$ is constant along $Y$ (because $N\subset Z_\Gamma= \mu^{-1} (\Gamma \boldsymbol{b})$). This implies that $\omega$ vanishes on the whole tangent space $\mathcal T_x N=\mathcal T_{R_\Gamma} \oplus \mathcal T_{T_\Gamma}$, i.e. $N$ is Lagrangian.

Now let us prove the H-minimality. By Proposition 2.5, we can consider only $T_\Gamma$-invariant Hamiltonian variations. By Theorem~\ref{theo4.3}, any $T_\Gamma$-invariant Hamiltonian variation of $N\subset Z_\Gamma\subset {\mathbb C}^m$ belongs to $Z_\Gamma$. Finally by Theorem~\ref{theoA}, the volume of $N$ is stationary with respect to all $T_\Gamma$-invariant variations in $Z_\Gamma$.
\end{proof}

\begin{exam}[One quadric]
Let $m-n=1$, i.e. $Z_\Gamma$ is defined by one equation
\begin{equation}
\gamma_1 |z_1|^2 + \ldots + \gamma_m |z_m|^2=c.
\end{equation}
Compactness implies that all coefficients are positive. The action $T_\Gamma \colon Z_\Gamma$ is free (equivalently, the initial polytope is Delzant) if and only if the following condition is satisfied: for every point $\boldsymbol{z} \in Z_\Gamma$ there is an equality ${\mathbb Z} \langle \gamma_{i_1},\ldots ,\gamma_{i_k} \rangle ={\mathbb Z} \langle \gamma_{1},\ldots ,\gamma_{k} \rangle =L$,
where $z_{i_1},\ldots,z_{i_k}$ are the only non-zero coordinates of~$\boldsymbol{z}$ (see~\cite[Theorem 4.1]{MP}). Because in our case $Z_\Gamma$ contains points with only one non-zero coordinate, every $\gamma_i$ should generate the same lattice as the whole set $\gamma_1,\ldots,\gamma_m$. Therefore $\gamma_1=\ldots =\gamma_m$, and $Z_\Gamma$ is a sphere $S^{2m-1}$ of radius $\sqrt{a}=\sqrt{\frac c \gamma_1}$ defined by equation
\begin{equation}|z_1|^2 + \ldots + |z_m|^2=a.
\end{equation}
The manifold $R_\Gamma \subset Z_\Gamma$ is a sphere in real part of $Z_\Gamma$:
\begin{equation}
S^{m-1}=\{ (r_1,\ldots,r_m) \in {\mathbb C}^m |\quad r_i \in {\mathbb R} \quad 1 \leq i \leq m,\quad r_1^2 + \ldots +r_{m}^2=a \}.
\end{equation}
In order to get $N=R_\Gamma \times_{D_\Gamma} T_\Gamma $, one should <<spread>> the sphere $R_\Gamma \cong S^{m-1}$ by the action of the circle $T_\Gamma= \{(e^{2\pi i \phi)},\dots,e^{2\pi i \phi)}) \in {\mathbb C}^m \} \cong S^1$. In this way, depending on whether the involution changes the orientation on $S^{m-1}$, we get the manifold

\begin{equation}
\begin{aligned}
&N(m) \cong S^{m-1}\times S^1 && \text{for even} \quad m, \\
&N(m) \cong K^m && \text{for odd} \quad m,
\end{aligned}
\end{equation}
where $K^m$ is an $m$-dimensional Klein bottle.
The submanifold $N(m)$ is minimal in $S^{2m-1}$ and is H-minimal Lagrangian in ${\mathbb C}^m$.
\end{exam}

\begin{exam}[Two quadrics]
Let $m-n=2$, then $Z_\Gamma$ is defined by equations
\begin{equation}
\left \{
\begin{aligned}
&\gamma_{11} |z_1|^2 + \ldots + \gamma_{m1} |z_m|^2=c, \\
&\gamma_{12} |z_1|^2 + \ldots + \gamma_{m2} |z_m|^2=0,\\
\end{aligned}
\right.
\end{equation}
where $\gamma_{k1} > 0$, $c > 0$, $\gamma_{j2} > 0$, $\gamma_{i2} < 0$  for $1 \leq k \leq m$, $1 \leq j \leq p$, $p+1 \leq i \leq m$ (this is the canonical form of an intersection of two quadrics described in~\cite[Proposition 4.2]{MP}). The second equation defines a cone over the product of two ellipsoids of dimensions $2p-1$ and $2q-1$. By intersecting it with ellipsoid of dimension $2m-1$, defined by first equation, we obtain that $Z_\Gamma \cong S^{2p-1} \times S^{2q-1}$ and $R_\Gamma \cong S^{p-1}\times S^{q-1}$. Let us note that the corresponding polytope is combinatorially equivalent to the product of simplices $\triangle^{p-1}\times\triangle^{q-1}$.

In~\cite{MP} it is proved that in this case
\begin{equation} 
N_\Gamma = N_l (p,q)= R_\Gamma \times_{D_\Gamma} T_\Gamma \approx (S^{p-1}\times S^{q-1}) \times_{\mathbb{Z}_2\times \mathbb{Z}_2} S^1 \times S^1,
\end{equation}
where the actions on the right side are antipodal involutions, and on the left side they are given by

\begin{equation}
\begin{aligned}
&\varphi_1: (u_1,\dots ,u_m)\rightarrow(-u_1,\dots,-u_l,-u_{l+1},\dots,-u_p,u_{p+1},\dots,u_m),\\
&\varphi_2: (u_1,\dots ,u_m)\rightarrow(-u_1,\dots,-u_l,u_{l+1},\dots,u_p,-u_{p+1},\dots,-u_m).
\end{aligned}
\end{equation}

\noindent So the topological type of $N_\Gamma$ is defined by three numbers $p$,$q$ and $l$ where $p+q=m$ and $0 \leq l \leq p$. 

In general there is a fiber bundle
\begin{equation}
\begin{tikzcd}
R_\Gamma \times_{D_\Gamma} T_\Gamma = N_\Gamma \arrow[rightarrow]{r}{R_\Gamma}
  & T_\Gamma/D_\Gamma \cong T^{m-n},
\end{tikzcd}
\end{equation}
so in case of $m-n=2$ we get
\begin{equation}
\begin{tikzcd}[column sep=large]
N_l(p,q) \arrow[rightarrow]{r}{S^{p-1} \times S^{q-1}}
  & T^{2}.
\end{tikzcd}
\end{equation}
Notice that transformations of fibers in this bundle are described by $\varphi_1$ and $\varphi_2$. If $p$ is odd then $\varphi_1$ changes the orientation of $R_\Gamma = S^{p-1} \times S^{q-1}$. This implies that the fiber bundle is unorientable and hence is not trivial. Same argument works if $l+q$ is odd.

We also have $N_l (p,q) =N(p)\times_{{\mathbb Z}/2} (S^{q-1} \times S^1) $ because $\varphi_1$ does not act on $S^{q-1}$ and acts by antipodal involution on $S^{p-1}$
(where $N(p)$ is the manifold from the case of $m-n=1$). This also gives us another interesting fiber bundle
\begin{equation}
\begin{tikzcd}[column sep=large]
N_l(p,q) \arrow[rightarrow]{r}{N(p)}
  & N(q)
\end{tikzcd}
\end{equation}
which topology depends on $l$. If $p$ and $q$ are even, but $l$ is odd, then this bundle is non-trivial, because we know that $N_l(p,q) \neq R_\Gamma \times T^2 = S^{p-1} \times S^{q-1} \times T^2 = N(p) \times N(q)$. If numbers $p$,$q$ and $l$ are even then this bundle is trivial, because of the fact that antipodal involution of odd-dimensional sphere (or even-dimensional vector space) is isotopic to the identity by rotations in two-planes.

The manifold $N$ is a minimal submanifold of $Z_\Gamma \cong S^{2p-1} \times S^{2q-1}$ and an H-minimal Lagrangian submanifold of ${\mathbb C}^m$. If $p=q=2$ and $l=1$ we get a minimal embedding $N_1(2,2)\hookrightarrow Z_\Gamma \cong S^{3} \times S^{3}$, where $N_1(2,2)\rightarrow T^2$ is a nontrivial bundle with fiber $T^2$. In the same time there is a trivial bundle $T^4=T^2\times T^2=N_0(2,2)$ which minimally embeds into $Z_\Gamma \cong S^{3} \times S^{3}$. The latter embedding is a product of two minimal embeddings $T^2 \hookrightarrow S^3$ constructed from one quadric from the previous example.
\end{exam}

\section{H-minimal Lagrangian submanifolds in toric manifolds}

Here we review the construction of Mironov and Panov~\cite{MP2} and give a more
explicit proof of their main theorem, which is independent of the results
of~\cite{D}.

Consider two sets of quadrics $Z_\Gamma$ and $Z_\Delta$:
\begin{align}
&Z_\Gamma= \{\boldsymbol{z}=(z_1,\dots,z_m) \in {\mathbb C}^m \colon \sum_{k=1}^m \gamma_{jk}|z_k|^2= c_j,  \quad 1\leq j \leq m-n  \}, \\
&Z_\Delta=  \{\boldsymbol{z}=(z_1,\dots,z_m) \in {\mathbb C}^m \colon \sum_{k=1}^m \delta_{jk}|z_k|^2= d_j,  \quad 1\leq j \leq m-l \},
\end{align}
and denote $Z_\Gamma \cap Z_\Delta$ as $Z_{\Gamma \Delta}$, quadric corresponding to the matrix $\Gamma \Delta$ obtained by placing the matrix $\Gamma$ right on top of the matrix $\Delta$. We assume that  $Z_\Gamma$, $Z_\Delta$ and $Z_{\Gamma \Delta}=Z_\Gamma \cap Z_\Delta$ are non-degenerate rational intersections of quadrics. For instance for $Z_\Gamma$ this means that the following three conditions are satisfied:

\begin{itemize}

\item[(a)] $\boldsymbol{c} \in R_{\ge} \langle \gamma_1,\ldots,\gamma_m \rangle$,

\item[(b)] if $\boldsymbol{c} \in R_{\ge} \langle \gamma_{i_1},\ldots,\gamma_{i_k} \rangle$, then $k\geq m-n$,

\item[(c)] $\gamma_1,\ldots,\gamma_m$ generate a lattice $L$ of maximal rank in ${\mathbb R}^{m-n}$.
\end{itemize}

\noindent We also assume that the polytopes associated with the intersections of quadrics  $Z_\Gamma$, $Z_\Delta$ and $Z_{\Gamma \Delta}$ are Delzant (for the construction of the polytope associated with an intersection of quadrics, see~\cite{MP}).

The groups $T_\Delta$, $T_{\Gamma \Delta}$, $D_\Delta$, and $D_{\Gamma \Delta}$ are defined similarly as the groups $T_\Gamma$ and $D_\Gamma$. Notice that $n+l$ should be $\ge m$ and $T_\Gamma^{m-n}$ is transverse to $T_\Delta^{m-l}$ inside $T_{\Gamma \Delta}^{2m-n-l} \subset T^m$, and all these three tori act freely on $Z_{\Gamma \Delta}$.

The idea is to use the first set of quadrics to produce a toric manifold $V_\Gamma$ via symplectic reduction as in diagram \eqref{diag1}, and
then use the second set of quadrics to define an H-minimal Lagrangian submanifold in $V_\Gamma$. We have the toric manifold $V_\Gamma=Z_\Gamma/T_\Gamma$ with a Hamiltonian torus action $T_\Delta \colon V_\Gamma$. The moment map $\mu_\Delta \colon V_\Gamma \rightarrow {\mathbb R}^{m-l}$ for the action $T_\Delta : V_\Gamma$ is given by the formula  
\begin{equation}
\mu_\Delta(x)=\Delta \cdot \mu ( \pi^{-1}(x))= \Delta \cdot (|z_1|^2,\ldots,|z_m|^2)^t
\end{equation}
where $\pi^{-1}(x)=(z_1,\ldots,z_m)$ is any preimage of $x$ by the map $\pi \colon Z_\Gamma \rightarrow V_\Gamma$. Now we can apply symplectic reduction by the action $T_\Delta \colon V_\Gamma$. Everything we need can be seen on the following diagram:

\begin{equation}
\begin{tikzcd}
  & Z_{\Gamma \Delta}^{n+l} \arrow[hookrightarrow]{r} \arrow[rightarrow]{d}
  & Z_{\Gamma}^{m+n} \arrow[hookrightarrow]{r} \arrow[rightarrow]{d}{\pi}
  & {\mathbb C}^{m} \arrow[dotted]{dl}{\text{symplectic reduction by the action of $T_\Gamma$}}\\
(R_{\Gamma \Delta}/D_\Gamma)^{n+l-m} \arrow[hookrightarrow]{r} \arrow[rightarrow]{d}{\tilde{r}}
  & (Z_{\Gamma \Delta}/T_\Gamma)^{2n+l-m} \arrow[hookrightarrow]{r}{i}\arrow[rightarrow]{d}{\tilde{\pi}}
  & V_\Gamma^{2n} \arrow[dotted]{dl}{\text{symplectic reduction by the action of $T_\Delta$}}\\
U_{\Gamma \Delta}^{n+l-m} \arrow[hookrightarrow]{r}
  & V_{\Gamma \Delta}^{2(n+l-m)}
\end{tikzcd}
\end{equation}
where the fibers of the projections $\tilde{\pi}$ and $\tilde{r}$ are $T_\Delta$ and $D_\Delta$, respectively.

Consider the $n$-dimensional submanifold 
\begin{equation}
\tilde{N} = \tilde{\pi} ^{-1}(U_{\Gamma \Delta}) \cong (R_{\Gamma \Delta}/D_\Gamma) \times_{D_\Delta} T_\Delta
\end{equation} 
of the manifold $ Z_{\Gamma \Delta}/T_\Gamma  \subset V_\Gamma $. Similarly to Theorems~\ref{theoA} and~\ref{theoB}, we get:
\begin{theo}
The manifold $\tilde{N}$ is a minimal submanifold of the manifold $ Z_{\Gamma \Delta}/T_\Gamma $ and an H-minimal Lagrangian submanifold of toric manifold $V_\Gamma$.
\end{theo}
\begin{proof}
We have a sequence of embeddings: $ \tilde{N} \hookrightarrow Z_{\Gamma \Delta} / T_\Gamma \hookrightarrow V_\Gamma$. As before the manifold $\tilde{N}$ is a Lagrangian submanifold in $V_\Gamma$ because the tangent space decomposes $\mathcal T_x \tilde{N}=\mathcal T_{R_{\Gamma \Delta}/ D_\Gamma} \oplus \mathcal T_{T_\Delta}$ into the real subspace and the torus action subspace. We only have to note that symplectic form $\omega_{red}$ on $V_\Gamma$ is equal to the canonical Kaehler form, which is zero on the real part $R_\Gamma$, and thus will be zero on $R_{\Gamma \Delta}/D_\Gamma$.

Let us prove H-minimality. We can consider only $T_\Delta$-invariant Hamiltonian variations. By Noether's theorem all these variations of $\tilde{N}$ will be lying inside $Z_{\Gamma \Delta} / T_\Gamma$. It is left to show that $\tilde{N}$ is minimal in $Z_{\Gamma \Delta} / T_\Gamma$. As before in Theorem~\ref{theoA}, this follows from $U_{\Gamma \Delta}$ being minimal in $V_{\Gamma \Delta}$ with the appropriately corrected metric $\tilde{ \tilde{g}} =( \Vo ^{ \frac {2} {n+l-m}} )g$ on $V_{\Gamma \Delta}$, where $g$ is the standart metric on $V_\Gamma$, and function $\Vo(x)=\Vol(\tilde{\pi}^{-1}(x) )$ is the volume of the orbit. For this fact the reason is that conjugation is isometry on $V_\Gamma$ (just as on $\mathbb{C}^m$), and thus preserves the volume of the orbits $\tilde{\pi}^{-1}(x)$ . 
\end{proof}
\begin{exam}
(a). If $m-n=0$, then the set of quadrics defining $Z_\Gamma$ is void, so $Z_\Gamma=V_\Gamma={\mathbb C}^m$ and we obtain the original construction of submanifolds $N$ which are minimal in $Z_\Delta$ and H-minimal Lagrangian in ${\mathbb C}^m$.

(b). If $m-l=0$, then the set of quadrics defining $Z_{\Delta}$ is void, so $\tilde{N}$ is a real toric manifold $U_\Gamma$, which is minimal (even totally geodesic) in $V_\Gamma$.

(c). If $m-n=1$, then $Z_\Gamma \cong S^{2m-1}$, so we get H-minimal Lagrangian submanifolds $\tilde{N}$ of the projective space $V_\Gamma=Z_\Gamma / S^1={\mathbb C}P^{m-1}$, which also embed minimally into submanifold $Z_{\Gamma \Delta} / T_\Gamma = (S^{2m-1} \cap Z_\Delta )/ S^1$. This family contains H-minimal Lagrangian tori in $\mathbb{C}P^2$ and $\mathbb{C}P^3$ constructed in~\cite{Mi1},~\cite{MZ}.
\end{exam}

\section*{Acknowledgments}

This work is a result of the author's senior thesis at Lomonosov Moscow State University. The author is  grateful to his advisor T.E.Panov for suggesting the problem and continuous support during the research.

\address{Lomonosov Moscow State University}

\email{artofkot@gmail.com}

\end{document}